\renewcommand{\(}{\left(}
\renewcommand{\)}{\right)}
\renewcommand{\[}{\left[}
\renewcommand{\]}{\right]}
\newcommand{\End}[1]{{\rm{End}}}
\newtheorem{lemma}{Lemma}
\newtheorem{theorem}{Theorem}
\begin{document}
\title{Central Limit Theorem for \\ Symmetric Exchangeable Random Variables}
\author{
    \IEEEauthorblockN{Ilya Soloveychik} \\
    \IEEEauthorblockA{\normalsize The Hebrew University of Jerusalem}
}

\maketitle

\vspace{-0.8cm}
\begin{abstract}
A central limit theorem for arrays of symmetric row-wise exchangeable random variables is presented. The result is valid for finite and infinite extendable and non-extendable sequences. Unlike most reported versions of the central limit theorem valid only for partial sums of finite non-extendable sequences, ours applies to the entire sum and mimics the classical theorem in this sense.
\end{abstract}

\section{Introduction}
Given a finite or infinite sequence $\{X_i\}$ of random variables, we say that it is exchangeable if the joint distribution of any finite subset of variables is invariant under arbitrary permutations of the variables. Exchangeable random variables were first introduced by de Finetti \cite{de1929funzione, de1937prevision} as a direct and natural generalization of independent and identically distributed  sequences. Unlike the case of independent variables, the behavior of sums of exchangeable variables is significantly harder to study. De Finetti \cite{de1929funzione, de1937prevision} and some of his followers focused on infinite exchangeable sequences. There exist, however, finite sets of exchangeable random variables which cannot be embedded into infinite exchangeable sequences, these are called \textit{finitely exchangeable} or \textit{non-extendable}, see \cite{konstantopoulos2019extendibility} and references therein. The analysis of extendable sequences can be reduced to the analysis of infinite sequences \cite{taylor1988limit}. On the other hand, the non-extendable sequences require quite different approaches. Most known central limit theorems for exchangeable non-extendable random sequences apply only to partial sums under additional conditions on the behavior of the entire sum \cite{weber1980martingale, chernoff1958central}. In this letter, we show that when the variables have symmetric marginal distributions, the classical central limit theorem applies to them.

\section{Central Limit Theorems for Exchangeable Arrays}
Let $\{k_n\}_{n=1}^\infty$ and $\{m_n\}_{n=1}^\infty$ be two sequences of natural numbers such that $k_n < m_n$ and
\begin{equation}
\label{eq:m_n_cond}
\frac{k_n}{m_n} \to \gamma \in [0,1).
\end{equation}
When dealing with finite exchangeable sequences, we consider triangular arrays of row-wise exchangeable variables of the form $\{X_{ni}\}_{n,i=1}^{\infty,m_n}$.

\begin{lemma}[Central Limit Theorem for Exchangeable Arrays, Theorem 2 from \cite{weber1980martingale}]
\label{eq:martin_theorem}
Let $\{X_{ni}\}_{n,i=1}^{\infty,m_n}$ be a triangular array of row-wise exchangeable random variables such that
\begin{enumerate}
\item $\mathbb{E}\[X_{n1}X_{n2}\] \to 0,\;\; n \to \infty$,
\item $\max\limits_{1\leqslant i \leqslant m_n} \frac{|X_{ni}|}{\sqrt{m_n}} \xrightarrow{P} 0,\;\; \forall n$,
\item $\frac{1}{k_n}\sum_{i=1}^{k_n} X_{ni}^2 \xrightarrow{P} 1,\;\; n \to \infty$.
\end{enumerate}
Then
\begin{equation}
\sqrt{k_n}\[\frac{1}{k_n}\sum_{i=1}^{k_n}X_{ni} - \frac{1}{m_n}\sum_{i=1}^{m_n}X_{ni}\] \xrightarrow{L} \mathcal{N}(0,1-\gamma),\;\; n \to \infty.
\end{equation}
\end{lemma}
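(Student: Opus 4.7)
Since this result is Weber's martingale CLT for exchangeable arrays, the plan is to construct a martingale difference array that captures the target statistic and then invoke a standard martingale central limit theorem (e.g., of Hall--Heyde type).

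The key structural observation is that row-wise exchangeability, combined with conditioning on the row sum $T_n = \sum_{i=1}^{m_n} X_{ni}$, makes the row behave as a simple random sample without replacement from its own values. Exchangeability yields
\begin{equation*}
\mathbb{E}\left[X_{nj} \mid X_{n1},\ldots,X_{n,j-1}, T_n\right] = \frac{T_n - S_{n,j-1}}{m_n - j + 1}, \quad S_{n,j}=\sum_{i=1}^{j}X_{ni},
\end{equation*}
so the centered increments $D_{n,j} = X_{nj} - (T_n - S_{n,j-1})/(m_n - j + 1)$ form a martingale difference array with respect to $\mathcal{F}_{n,j} = \sigma(X_{n1},\ldots,X_{nj}, T_n)$. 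The first step is to relate the martingale sum $\sum_{j=1}^{k_n} D_{n,j}$ to the statistic $\sqrt{k_n}\bigl(S_{n,k_n}/k_n - T_n/m_n\bigr)$; the two agree up to a predictable drift that is asymptotically negligible under the given hypotheses.

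With the martingale in hand, the proof reduces to verifying the two classical conditions of the martingale CLT. The conditional Lindeberg condition follows from Condition~(2), since each $|D_{n,j}|/\sqrt{k_n}$ is dominated by $\max_i |X_{ni}|/\sqrt{m_n}$ up to lower-order corrections. The conditional quadratic variation converges to $1-\gamma$ by combining Condition~(3), which identifies the empirical second moment of the row with $1$ in the limit, with Condition~(1), which ensures that cross-term contributions arising from the subtracted conditional means (pairwise products $X_{ni}X_{nj}$, $i \neq j$) are asymptotically negligible.

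The principal technical obstacle is the conditional-variance step: one must show that $\sum_{j=1}^{k_n}\mathbb{E}\bigl[D_{n,j}^2 \mid \mathcal{F}_{n,j-1}\bigr]$ converges in probability to $k_n(1-\gamma)$. The finite-population correction $(m_n - j)/(m_n - j + 1)$, aggregated over $j \leq k_n$ and sent to the limit via \eqref{eq:m_n_cond}, is what produces the factor $1-\gamma$; meanwhile, the vanishing pairwise covariance from Condition~(1) is what allows the empirical second moment of Condition~(3) to substitute for the population second moment in the finite-population variance formula. This is where all three hypotheses must act in concert.
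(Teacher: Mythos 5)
First, a point of reference: the paper does not prove this lemma at all --- it is imported verbatim as Theorem~2 of Weber \cite{weber1980martingale}, and the letter's only original argument is the proof of Theorem~\ref{thm:main}. So there is no in-paper proof to compare yours against; what can be assessed is whether your sketch would deliver the stated conclusion. Your overall strategy --- build a martingale difference array with respect to $\mathcal{F}_{n,j}=\sigma(X_{n1},\dots,X_{nj},T_n)$, exploiting that exchangeability makes the row behave like sampling without replacement from its own values, then invoke a martingale CLT --- is indeed the route Weber takes, and your identity $\mathbb{E}[X_{nj}\mid\mathcal{F}_{n,j-1}]=(T_n-S_{n,j-1})/(m_n-j+1)$ is correct.

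The genuine gap is in your reduction step, and it changes the answer. The statistic is \emph{not} the unweighted martingale sum up to a negligible predictable drift. Setting $R_j=T_n-S_{n,j}$ and $\bar R_j=R_j/(m_n-j)$, one checks that $\bar R_j$ is an $\mathcal{F}_{n,j}$-martingale with increments $\bar R_j-\bar R_{j-1}=-D_{n,j}/(m_n-j)$, which yields the \emph{exact} identity
\begin{equation*}
\frac{1}{k_n}S_{n,k_n}-\frac{1}{m_n}T_n \;=\; \frac{m_n-k_n}{k_n}\sum_{j=1}^{k_n}\frac{D_{n,j}}{m_n-j}.
\end{equation*}
The weights $1/(m_n-j)$ vary by a factor of roughly $1/(1-\gamma)$ over $j\leqslant k_n$ and cannot be replaced by a constant when $\gamma>0$ --- which is exactly the regime the paper needs, since Theorem~\ref{thm:main} applies the lemma with $\gamma=1/2$. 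Since each $\mathbb{E}\bigl[D_{n,j}^2\mid\mathcal{F}_{n,j-1}\bigr]$ is asymptotically $1$, your unweighted sum $k_n^{-1/2}\sum_j D_{n,j}$ has limiting variance $1$, not $1-\gamma$; the discarded ``drift'' $k_n^{-1/2}\sum_{j}(\bar R_0-\bar R_{j-1})$ is predictable but has standard deviation of order $k_n/m_n\to\gamma$, hence is not negligible. The factor $1-\gamma$ actually arises from the sum of squared weights, $\frac{(m_n-k_n)^2}{k_n}\sum_{j=1}^{k_n}(m_n-j)^{-2}\to 1-\gamma$, and not from aggregating the corrections $(m_n-j)/(m_n-j+1)$: their product telescopes to $(m_n-k_n)/m_n$, but the conditional quadratic variation is a sum, so that bookkeeping does not produce the right constant. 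Two smaller unaddressed points: the conditional-variance step needs the empirical second moment of the \emph{remaining} population to stay near $1$ uniformly in $j\leqslant k_n$, which requires a maximal inequality rather than Condition~(3) alone; and Condition~(3) controls only the first $k_n$ squares, so it must first be upgraded to the full row (using exchangeability of the $X_{ni}^2$) before it can feed the finite-population variance formula.
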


The main result of this letter reads as follows.
\begin{theorem}[Central Limit Theorem for Symmetric Exchangeable Arrays]
\label{thm:main}
Let $\{X_{ni}\}_{n,i=1}^{\infty,m_n}$ be a triangular array of symmetric row-wise exchangeable random variables such that
\begin{enumerate}
\item $\mathbb{E}\[X_{n1}X_{n2}\] \to 0,\;\; n \to \infty$,
\item $\max\limits_{1\leqslant i \leqslant m_n} \frac{|X_{ni}|}{\sqrt{m_n}} \xrightarrow{P} 0,\;\; \forall n$,
\item $\frac{1}{m_n}\sum_{i=1}^{m_n} X_{ni}^2 \xrightarrow{P} 1,\;\; n \to \infty$.
\end{enumerate}
Then
\begin{equation}
\frac{1}{\sqrt{m_n}}\sum_{i=1}^{m_n}X_{ni} \xrightarrow{L} \mathcal{N}(0,1),\;\; n \to \infty.
\end{equation}
\end{theorem}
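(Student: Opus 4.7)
The strategy is to apply Lemma~\ref{eq:martin_theorem} with $k_n=\lfloor m_n/2\rfloor$ (so $\gamma=1/2$) in order to obtain a central limit theorem for a specific signed partial sum, and then to use the symmetry of each row to identify this signed sum in distribution with the full sum $S_n:=\sum_{i=1}^{m_n}X_{ni}$.

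First, I verify Weber's hypotheses with this choice of $k_n$. Hypotheses (1) and (2) of our theorem coincide with those of Lemma~\ref{eq:martin_theorem}. Weber's hypothesis~(3), $\frac{1}{k_n}\sum_{i=1}^{k_n}X_{ni}^2\xrightarrow{P}1$, is deduced from our hypothesis~(3) using row-wise exchangeability: the $k_n$-subset average of an exchangeable list of nonnegative variables is an unbiased estimator of the full-row average, and its conditional variance is controlled by the empirical spread of the squares, which vanishes by our hypothesis~(3).

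With Weber's hypotheses in force, Lemma~\ref{eq:martin_theorem} yields $\sqrt{k_n}\bigl[\tfrac{1}{k_n}\sum_{i=1}^{k_n}X_{ni}-\tfrac{1}{m_n}\sum_{i=1}^{m_n}X_{ni}\bigr]\xrightarrow{L}\mathcal{N}(0,\tfrac{1}{2})$. A short algebraic rearrangement recognizes the left-hand side as $\frac{1}{\sqrt{2m_n}}\bigl[\sum_{i=1}^{k_n}X_{ni}-\sum_{i=k_n+1}^{m_n}X_{ni}\bigr]$, so the conclusion is equivalent to $\frac{1}{\sqrt{m_n}}\sum_{i=1}^{m_n}\varepsilon_i X_{ni}\xrightarrow{L}\mathcal{N}(0,1)$ with the sign pattern $\varepsilon_i=+1$ for $i\leqslant k_n$ and $\varepsilon_i=-1$ otherwise. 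The symmetry of each row, combined with row-wise exchangeability, supplies for every fixed $\delta\in\{-1,+1\}^{m_n}$ the joint identity $(\delta_1 X_{n1},\ldots,\delta_{m_n}X_{n,m_n})\stackrel{d}{=}(X_{n1},\ldots,X_{n,m_n})$; taking $\delta=\varepsilon$ yields $\sum_i\varepsilon_i X_{ni}\stackrel{d}{=}S_n$, and substituting in the previous display gives the desired conclusion.

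The crux of the argument is this last identification: the ``symmetric'' hypothesis must be translated into full joint sign invariance of the row, which is precisely what equates Weber's signed partial-sum statistic with the full sum in distribution and thereby lifts Weber's partial-sum CLT to a CLT for the entire sum. The deduction of Weber's hypothesis~(3) from ours via exchangeability is the other point that requires careful handling.
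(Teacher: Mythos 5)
Your proposal is correct and follows essentially the same route as the paper: both apply Weber's lemma with $k_n=m_n/2$ (so $\gamma=1/2$) and use the sign-flip invariance supplied by symmetry to identify the signed half-difference $\sum_{i\leqslant k_n}X_{ni}-\sum_{i>k_n}X_{ni}$ with the full sum, the only difference being that the paper performs the sign flip \emph{before} invoking the lemma (defining $Y_{ni}=\varepsilon_i X_{ni}$ so that Weber's statistic is literally $\frac{1}{m_n}\sum_i X_{ni}$) whereas you perform it afterwards. One minor note: in your verification of Weber's hypothesis (3), the conditional (hypergeometric) variance of the half-row average of squares is controlled not by hypothesis (3) alone but by (2) and (3) together, via $\frac{1}{m_n^2}\sum_i X_{ni}^4\leqslant \max_i\frac{X_{ni}^2}{m_n}\cdot\frac{1}{m_n}\sum_i X_{ni}^2\xrightarrow{P}0$ --- a step the paper itself glosses over with ``naturally fulfilled due to the exchangeability.''
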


\begin{proof}
Assume without loss of generality that $m_n$ is an even number and set
\begin{equation}
k_n = \frac{m_n}{2}.
\end{equation}
For every $n$, consider the following sequence,
\begin{equation}
Y_{ni} = \begin{cases} X_{ni}, & i\leqslant k_n, \\ -X_{ni}, & i> k_n. \end{cases}
\end{equation}
Due to the symmetry of the marginal distributions, the new sequence is exchangeable with the same joint distribution as the original sequence. Note that requirement 3) of Lemma \ref{eq:martin_theorem} is naturally fulfilled due to the exchangeability. The other conditions of Lemma \ref{eq:martin_theorem} are clearly satisfied as well and we obtain,
\begin{equation}
\sqrt{\frac{m_n}{2}}\[\frac{2}{m_n}\sum_{i=1}^{m_n/2}Y_{ni}-\frac{1}{m_n}\sum_{i=1}^{m_n}Y_{ni}\] \xrightarrow{L} \mathcal{N}\(0,\frac{1}{2}\),\;\; n \to \infty.
\end{equation}
Consider the following sequence of equalities,
\begin{multline}
\frac{2}{m_n}\sum_{i=1}^{m_n/2}Y_{ni}-\frac{1}{m_n}\sum_{i=1}^{m_n}Y_{ni} = \frac{2}{m_n}\sum_{i=1}^{m_n/2}X_{ni}-\frac{1}{m_n}\[\sum_{i=1}^{m_n/2}X_{ni} - \sum_{i=m_n/2+1}^{m_n}X_{ni}\] \\ = \frac{1}{m_n}\sum_{i=1}^{m_n/2}X_{ni} + \frac{1}{m_n} \sum_{i=m_n/2+1}^{n}X_{ni} = \frac{1}{m_n}\sum_{i=1}^{m_n}X_{ni},
\end{multline}
to get
\begin{equation}
\sqrt{\frac{m_n}{2}}\cdot\frac{1}{m_n}\sum_{i=1}^{m_n}X_{ni} \xrightarrow{L} \mathcal{N}\(0,\frac{1}{2}\),\;\; n \to \infty,
\end{equation}
or
\begin{equation}
\frac{1}{\sqrt{m_n}}\sum_{i=1}^{m_n}X_{ni} \xrightarrow{L} \mathcal{N}\(0,1\),\;\; n \to \infty.
\end{equation}
Note that if $m_n$ is odd we just disregard the last element of the sum which does not change the limiting distribution since the effect of one variable is limited by the assumptions. This concludes the proof.
\end{proof}

\bibliography{}

\end{document}